\newtheorem*{theorem}{Theorem}
\newtheorem*{proposition}{Proposition}
\newtheorem*{observation}{Observation}
\newcommand\inv{^{-1}}
\begin{document}

\title{Weakly Negative Circles Versus Best Clustering in Signed Graphs}

\author{Michael G.\ Gottstein}
\address{Dept.\ of Mathematics and Statistics, Binghamton University, Binghamton, NY 13902-6000, U.S.A.}
\email{mgottst2@binghamton.edu}
\author{Leila Parsaei-Majd}
\address{University of Potsdam, Germany}
\email{leila.parsaei84@yahoo.com}
\author{Thomas Zaslavsky}
\address{Dept.\ of Mathematics and Statistics, Binghamton University, Binghamton, NY 13902-6000, U.S.A.}
\email{zaslav@math.binghamton.edu}

\subjclass[2010]{05C22}
\keywords{Signed graph clustering, correlation clustering, weakly negative circle}

\begin{abstract}
Clustering a signed graph means partitioning the vertices into sets (``clusters'') so that every positive edge, and no negative edge, is within a cluster.  Clustering is not always possible; the obstruction is circles with exactly one negative edge (``weakly negative circles'').  The correlation clustering problem is to cluster with the minimum number of edges that violate the clustering rule, called $Q$.  A lower bound is $w$, the maximum number of edge-disjoint weakly negative circles.  If every two such circles are edge disjoint, then $Q=w$.  We characterize signed graphs of this kind.
\end{abstract}

\maketitle

A signed graph $\Sigma=(\Gamma,\sigma)$ consist of a graph $\Gamma = (V,E)$ and a sign function $\sigma: E \to \{+,-\}$.  A standard question about a signed graph is whether it is clusterable, i.e., whether it has a partition of $V$ such that every positive edge is inside a part of the partition and every negative edge extends between parts.  Davis \cite{Davis} introduced clustering and proved that $\Sigma$ has a clustering if and only if no circle (cycle, polygon) has exactly one negative edge.  Much later, Bansal et al.\ \cite{Bansal} introduced correlation clustering for signed graphs that are not clusterable; this is the problem of finding an optimal clustering of $V$, which means a partition with the smallest possible number of edges that disagree with the definition of a clustering.  That minimum number was introduced in \cite{Dor}; we call it the ``inclusterability index'', $Q(\Sigma)$.  

Finding an optimal clustering is NP-hard \cite{Bansal} and therefore thought to be intrinsically difficult; no good general algorithm is known.  There has been much work on algorithms for clusterings that may approach optimality; we refer to \cite{Parsaei} for an example and citations.  We treat here a special case in which a perfect solution is possible: both $Q(\Sigma)$ and an optimal clustering are readily computable.

Our graphs may have multiple edges but not loops.  Call a circle $C$ ``all positive'' if every edge in it is positive, and ``weakly negative'' if exactly one edge in it is negative.  A clear lower bound for $Q(\Sigma)$ is $w(\Sigma)$, the maximum number of ``non-overlapping'' (i.e., pairwise edge-disjoint) weakly negative circles (since at least every such circle has to be eliminated for clustering to occur), although this bound is rarely attained.  
Parsaei-Majd \cite{Parsaei} noted this bound and proved (her Lemma 2.5) that $Q(\Sigma) = w(\Sigma)$ if each pair of weakly negative circles is edge disjoint.  We give a complete structural characterization of signed graphs of this kind.  For completeness we begin with a short proof and a converse of their result.  Let $t(\Sigma) :=$ the total number of weakly negative circles.

\begin{proposition}
The following properties of a signed graph $\Sigma$ are equivalent:
\begin{enumerate}[{\rm (i)}]
\item\label{Qtotal} $Q(\Sigma) = t(\Sigma)$.
\item\label{wtotal} $w(\Sigma) = t(\Sigma)$.
\item\label{nonoverlap} No two weakly negative circles share a common edge.
\end{enumerate}
\end{proposition}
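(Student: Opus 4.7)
The plan is to sandwich $Q(\Sigma)$ between $w(\Sigma)$ and $t(\Sigma)$ via the universal chain $w(\Sigma)\le Q(\Sigma)\le t(\Sigma)$, and then to read off all four nontrivial implications from this chain. The lower bound $w\le Q$ is the observation already recorded in the introduction: in any clustering every weakly negative circle must contain at least one disagreement edge by Davis's theorem, and pairwise edge-disjoint circles demand distinct disagreement edges. For the upper bound $Q\le t$, I would choose one edge from each of the $t$ weakly negative circles; the resulting set $F$ of at most $t$ edges meets every weakly negative circle, so $\Sigma$ with $F$ deleted is clusterable by Davis's theorem, and any clustering of it is a clustering of $\Sigma$ whose disagreement set lies inside $F$.

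With this sandwich in hand, the equivalences fall out quickly. For (\ref{nonoverlap})$\Rightarrow$(\ref{wtotal}) the entire family of $t$ weakly negative circles is itself an edge-disjoint packing, so $w\ge t$ and hence $w=t$; conversely, a maximum packing certifying (\ref{wtotal}) consists of $t$ pairwise edge-disjoint weakly negative circles, and since there are only $t$ weakly negative circles in $\Sigma$ this packing exhausts them, yielding (\ref{nonoverlap}). The implication (\ref{nonoverlap})$\Rightarrow$(\ref{Qtotal}) is then immediate: (\ref{nonoverlap}) gives $w=t$, and together with $w\le Q\le t$ this forces $Q=t$.

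For the remaining direction (\ref{Qtotal})$\Rightarrow$(\ref{nonoverlap}) I would argue by contrapositive. Suppose two weakly negative circles $C$ and $C'$ share an edge $e$. Build the same hitting set as before, but use $e$ as the representative for both $C$ and $C'$; the result is a set of at most $t-1$ edges that still meets every weakly negative circle, so $Q\le t-1<t$, contradicting (\ref{Qtotal}).

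No single step is a serious obstacle; the closest to real content is the upper bound $Q\le t$, which rests on the identification of $Q(\Sigma)$ with the minimum number of edges whose deletion makes $\Sigma$ clusterable. That identification follows from Davis's theorem almost by inspection, since one direction converts an optimal clustering into a deletion set and the other converts a minimum deletion set into the disagreement set of a clustering of $\Sigma$.
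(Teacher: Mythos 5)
Your proof is correct and follows essentially the same route as the paper: establish the chain $w(\Sigma)\le Q(\Sigma)\le t(\Sigma)$, observe that (ii) and (iii) are equivalent by counting, and prove (i)$\Rightarrow$(iii) by the contrapositive, using a shared edge to build a hitting set of size less than $t$. You merely fill in details (the identification of $Q$ with the minimum deletion number, and the fact that every weakly negative circle contains a disagreeing edge) that the paper leaves as ``clear.''
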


\begin{proof}
It is clear that $w \leq Q \leq t$, so \eqref{wtotal} implies \eqref{Qtotal}, and that $w=t$ if and only if no two weakly negative circles overlap, \eqref{wtotal}$\iff$\eqref{nonoverlap}.  If two weakly negative circles do overlap, then by removing a common edge we show $Q < t$. Thus, if \eqref{Qtotal} $Q=t$, there are no such circles \eqref{nonoverlap}, hence $w=t$ \eqref{wtotal}.
\end{proof}

It is easy to make $w<Q<t$.  Consider $K_4$ with two nonadjacent edges signed negative.  The weakly negative circles are the triangles, so we have $w=1$, $Q=2$ (delete the negative edges), and $t=4$.

We digress slightly to note a general property of some weakly negative circles in any signed graph.

\begin{observation}
Suppose $W$ is a weakly negative circle in $\Sigma$ that is edge-disjoint from every other weakly negative circle.  Then $W$ has at most one vertex in common with each all-positive circle and each other weakly negative circle.
\end{observation}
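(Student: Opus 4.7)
The plan is to argue by contradiction: suppose $W$ shares two or more vertices with some circle $C$, where $C$ is either all-positive or a weakly negative circle distinct from $W$. My aim is to exhibit a weakly negative circle different from $W$ that shares an edge with $W$, contradicting the hypothesis on $W$.

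The first step is to pick $u,v \in V(W) \cap V(C)$ that are \emph{consecutive} intersection vertices along $C$; that is, the subpath $Q$ of $C$ from $u$ to $v$ along one of the two arcs of $C$ has no interior vertex on $W$. Such a pair exists because the intersection has at least two vertices. This choice guarantees that $Q$ shares no edges with $W$: every edge of $W$ has both endpoints on $W$, whereas every edge of $Q$ that is incident to an interior vertex of $Q$ has an endpoint off $W$. The one subtle case is $Q$ of length one (a single edge directly joining $u$ and $v$); here, since $C \neq W$ and the only subcircle of a circle is itself, at least one edge of $C$ is not an edge of $W$, and I can select the consecutive pair $u,v$ so that the single edge $Q$ is not in $W$.

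Next I split $W$ at $u,v$ into two $uv$-arcs $P_1$ and $P_2$, and let $e$ denote the unique negative edge of $W$, say lying on $P_1$ (so $P_2$ is all positive). Because the interior vertices of $Q$ avoid $W$, the unions $Q \cup P_1$ and $Q \cup P_2$ are genuine circles, each distinct from $W$ and each sharing its $P_i$-edges with $W$. A short sign analysis of $Q$ then produces the contradicting weakly negative circle: if $Q$ is all positive, then $Q \cup P_1$ has exactly the one negative edge $e$; if $Q$ carries a negative edge (which can only happen if $C$ is weakly negative and its unique negative edge lies on $Q$), then $Q \cup P_2$ has exactly that one negative edge. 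In either situation the resulting circle is weakly negative, distinct from $W$, and shares the edges of some $P_i$ with $W$, contradicting the hypothesis on $W$.

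The only point that requires real care is the selection of $u,v$ to guarantee that $Q$ is edge-disjoint from $W$; everything else is routine sign counting. I expect this edge-disjointness check in the degenerate length-one case (e.g.\ when $C$ is a digon with both vertices on $W$) to be the small obstacle to articulate cleanly, but the $C \neq W$ assumption resolves it.
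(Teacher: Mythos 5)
Your proof is correct and is essentially the paper's argument: both build the theta graph $W \cup Q$ from an arc $Q$ of $C$ between consecutive common vertices and extract from it a second weakly negative circle that shares edges with $W$. The only (immaterial) difference is that the paper always selects an all-positive arc of $C$, so the overlap is at the negative edge $e^-$, whereas you allow $Q$ to carry $C$'s negative edge and then complete it with the all-positive arc of $W$ instead; you are also somewhat more explicit than the paper about why $Q$ can be chosen edge-disjoint from $W$ in the length-one case.
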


\begin{proof}
Let $W$ have negative edge $e^-$ and let $C$ be another circle that is all positive or weakly negative and has two or more common vertices with $W$.  The common vertices divide $C$ into paths that are internally disjoint from $W$ (and, if $C$ is all positive, possibly also edges common to $W$ and $C$).  At least one such path $P_{vw}$ is all positive.  The union $W \cup P_{vw}$ is a theta graph consisting of two all-positive paths and one path with a single negative edge; these three paths form two weakly negative circles with the common edge $e^-$.
\end{proof}

Returning to the main topic, the description of graphs that satisfy \eqref{nonoverlap} is slightly complicated.  Let $\Sigma^+ = (V, E^+)$ be the positive subgraph of $\Sigma$, where $E^+ = \sigma\inv(+)$ is the set of positive edges.  A block of a graph or signed graph is a maximal inseparable subgraph, i.e., it has no cutpoints.  A block is ``trivial'' if it is isomorphic to $K_1$ or $K_2$.  A nontrivial block is a multiple edge or it is 2-connected, and every two edges are in a circle.  A ``cactus'' is a connected graph all of whose blocks are circles or isthmi.  A ``leaf'' is a vertex of degree 1.  
The union of all trivial blocks in a graph $\Gamma$ is a forest, which we call $F(\Gamma)$; this graph is a disjoint union of trees (which may have any positive number of vertices).

\begin{theorem}
In a signed graph $\Sigma$ every two weakly negative circles are edge disjoint if and only if $\Sigma$ has the following form:  

Each component tree $T$ of $F(\Sigma^+)$ may contain negative edges as long as $T$ with its negative edges forms a cactus.  Between components of $\Sigma^+$ there may be arbitrary negative edges.  There are no other negative edges.
\end{theorem}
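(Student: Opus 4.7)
My plan is to prove each direction separately, anchored to one block-structural lemma: \emph{any nontrivial block $B$ of $\Sigma^+$ meets $V(T)$ in at most one vertex for every tree component $T$ of $F(\Sigma^+)$.} Indeed, if $x,y\in V(B)\cap V(T)$ were distinct, the $B$-path and the $T$-path from $x$ to $y$ would be internally disjoint positive $x$--$y$ paths in $\Sigma^+$, so no edge of the $T$-path could be a bridge of $\Sigma^+$. Write $E_{\text{neg}}(T)$ for the set of negative edges with both endpoints in $V(T)$.

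\textbf{Sufficiency.} Suppose $\Sigma$ has the stated structure, and let $W$ be a weakly negative circle with negative edge $e$. A between-component negative edge admits no positive joining path, so $e$ lies inside some tree $T$ of $F(\Sigma^+)$, and its endpoints $u,v$ belong to $V(T)$. The block-path of $\Sigma^+$ from $u$ to $v$ uses only bridges of $T$---any nontrivial block on it would have to contain two distinct vertices of $V(T)$, violating the lemma---so the simple positive path $W-e$ traces these bridges in order and is the unique $T$-path from $u$ to $v$. Thus $W$ is the basic circle $C_e$ in $T+E_{\text{neg}}(T)$; since this graph is a cactus by hypothesis, its circles are pairwise edge disjoint, and weakly negative circles in different trees share no edges trivially.

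\textbf{Necessity.} Assume every two weakly negative circles are edge disjoint, so the Observation applies to each one. If $W$ is such a circle with negative edge $e$ and some $f\in W-e$ lay in a nontrivial positive block $B$, then $f$ would lie on a positive circle $C\subseteq B$ (always available in a 2-connected or multi-edge block), so $W$ and $C$ would share the two endpoints of $f$, contradicting the Observation. Hence $W-e$ consists only of bridges of $\Sigma^+$, all contained in one tree $T$ of $F(\Sigma^+)$, and both endpoints of $e$ lie in $V(T)$. The same argument, applied to a weakly negative circle $P+e$ formed from any negative edge $e$ whose endpoints share a positive component and a simple positive joining path $P$, shows every such negative edge has both endpoints in one tree of $F(\Sigma^+)$; the remaining negative edges necessarily run between components of $\Sigma^+$. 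For the cactus condition, the basic circles $C_e$ for $e\in E_{\text{neg}}(T)$ are themselves weakly negative and hence pairwise edge disjoint by hypothesis. Every circle in $T+E_{\text{neg}}(T)$ is a symmetric difference of basic circles with respect to the spanning tree $T$, and a symmetric difference of two or more pairwise edge-disjoint circles is their disjoint union of edges and is not a circle, so every circle equals some $C_e$, no two circles share an edge, and $T+E_{\text{neg}}(T)$ is a cactus.

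\textbf{Main obstacle.} The main technical point is the block-structural lemma and, in the sufficiency direction, the small case analysis showing that a simple positive path between two vertices of $V(T)$ cannot use any edge of a nontrivial block. The necessity direction bypasses this because the Observation hands us the theta-graph contradiction directly. The concluding cactus verification reduces to a standard basic-circles argument.
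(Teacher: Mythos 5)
Your proof is correct, and its core strategy matches the paper's: reduce every weakly negative circle to the fundamental circle of a negative chord over the unique bridge-path in a tree of $F(\Sigma^+)$, then translate pairwise edge-disjointness of these fundamental circles into the cactus condition. The differences are in execution. For necessity, the paper argues directly that a negative edge $vw$ inside a component of $\Sigma^+$ admitting two distinct positive $v$--$w$ paths yields two weakly negative circles sharing the edge $vw$; you instead route through the Observation and the block decomposition of $\Sigma^+$ (an edge of $W-e$ lying in a nontrivial positive block lies on a positive circle meeting $W$ in two vertices). These are the same theta-graph idea in different packaging, and the paper's version is slightly more economical because it never needs your block-structural lemma. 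In exchange, you supply two things the paper omits: a written-out sufficiency proof (the paper calls the converse ``reasonably clear'' and leaves it to the reader), and the explicit cycle-space argument that pairwise edge-disjoint fundamental circles force $T+E_{\text{neg}}(T)$ to be a cactus, which the paper only asserts. One small wording issue: in your lemma the $B$-path and the $T$-path from $x$ to $y$ need not be \emph{internally} disjoint (they can share cutvertices); what you actually use, and what holds because blocks partition the edge set, is that they are edge-disjoint, which already shows no edge of the $T$-path can be a bridge.
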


For the proof we assume $\Sigma$ has only edge-disjoint weakly negative circles and prove it has the described structure.  The converse is reasonably clear and is left to the reader.  

\begin{proof}
Consider a connected component $\Psi$ of $\Sigma^+$.  Those of its edges that are isthmuses of $\Sigma^+$ are the edges in $F(\Sigma^+) \cap \Psi$; that is, $F(\Psi)$ is the disjoint union of the trees $T$ of $F(\Sigma^+)$ that are contained in $\Psi$.
The remaining edges of $\Psi$ are contained in nontrivial blocks of $\Psi$.  

Add to $\Psi$ a negative edge $vw$.  If there is more than one path in $\Psi$ from $v$ to $w$, then $vw$ creates two weakly negative circles with a common edge.  If there is only one such path, then $v$ and $w$ must belong to the same tree $T$ of $F(\Sigma^+)$.
Thus, any negative edge of $\Sigma$ with both endpoints in $\Psi$ has both endpoints in a tree $T$ of $F(\Psi)$.

Suppose there are two negative edges, $uv$ and $wx$, with endpoints in the same tree $T$.  The endpoints are joined by unique paths $T_{uv}$ and $T_{wx}$ in the tree, and both $T_{uv} \cup uv$ and $T_{wx} \cup wx$ are weakly negative circles.  Therefore, there can be no common edge between $T_{uv}$ and $T_{wx}$.  This implies that $T$ along with all negative edges having both endpoints in $T$ must be a cactus.  Since a cactus has only edge-disjoint circles, such negative edges may exist.

We have shown the form of the negative edges with both endpoints in a single component of $\Sigma^+$.  On the other hand, negative edges between components of $\Sigma^+$ can never form a weakly negative circle, so they are unrestricted.  That completes the proof.
\end{proof}

In a signed graph of this type there is a disagreeing set that consists of the negative edges inside clusters, so it is easy to find both a best clustering and a minimum disagreeing set.  (The converse is not true; a signed graph with a minimum disagreeing set that consists of the negative edges within the clusters of a best clustering may have weakly negative circles that are not edge disjoint; simple examples are in \cite[Figures 7(a, b)]{Parsaei}.)  

We do a crude estimate of the computational complexity of testing $\Sigma$ for having only edge-disjoint weakly negative circles, by testing for the structure in the Theorem.  For simplicity we assume a simple graph $\Gamma$, whose order is $n = |V(\Sigma)|$.
\begin{enumerate}[Step 1.]
\item Find the positive subgraph $\Sigma^+$.  This requires checking the signs of the edges, which takes time $O(n^2)$.
\item Find the connected components $\Psi$ of the positive subgraph.  Time $O(n^2)$.
\item\label{forest} Find the set $E_F$ of isthmi of $\Sigma^+$ and form the forest $F(\Sigma^+) = (V(\Sigma), E_F)$.  Time $O(n^2)$ \cite{Tarjan}.
\item Find the negative edges within $\Psi$ for each $\Psi$.  
Test each such negative edge to see if it is contained within a tree of $F(\Sigma^+)$.  If any is not, stop.  Time $O(n^2)$.
\item\label{cactus}  If not stopped, test each tree $T$ of $F(\Sigma^+)$ with its induced negative edges to see if it is a cactus.  If any such tree with negative edges is not a cactus, stop.  This requires time $O(n^2)$ \cite[Lemma 4]{ZZ}.
\item If (and only if) not stopped, success. $\Sigma$ has no overlapping weakly negative circles and an optimal clustering is the partition induced by the components of $\Sigma^+$.
\end{enumerate}

The overall time requirement is $O(n^2)$, which may not be optimal; we leave improvement to others.



\begin{thebibliography}{9}

\bibitem{Bansal} Nikhil Bansal, Avrim Blum, and Shuchi Chawla, Correlation clustering, \emph{Machine Learning} 56(1-3) (2004), 89--113.

\bibitem{Davis} James A.\ Davis, Clustering and structural balance in graphs,  \emph{Human Relations} 20 (1967), 181--187.  
Repr.\ in  Samuel Leinhardt, ed., \emph{Social Networks: A Developing Paradigm}, pp.\ 27--33.  Academic Press, New York, 1977.

\bibitem{Dor}  Patrick Doreian and Andrej Mrvar,  A partitioning approach to structural balance.  \emph{Social Networks} 18 (1996), 149--168.

\bibitem{Parsaei}  Leila Parsaei-Majd, Correlation clustering for general graphs, submitted.

\bibitem{Tarjan} R.\ Endre Tarjan, A note on finding the bridges of a graph.  \emph{Information Processing Lett.}\ 2 (1973), 160--161.

\bibitem{ZZ} Bla\v{z} Zmazek and Janez \v{Z}erovnik, Computing the weighted Wiener and Szeged number on weighted cactus graphs in linear time.  \emph{Croatica Chemica Acta} 76(2) (2003), 137--143.

\end{thebibliography}
\end{document}